\numberwithin{equation}{section} 
\makeatletter \@addtoreset{equation}{section}
\makeatletter \@addtoreset{lemma}{section}
\makeatletter \@addtoreset{theorem}{section}
\makeatletter \@addtoreset{proposition}{section}
\makeatletter \@addtoreset{corollary}{section}
\makeatletter \@addtoreset{remark}{section}
\makeatletter \@addtoreset{definition}{section}
\makeatletter \@addtoreset{example}{section}
\begin{document}

\thispagestyle{firstpg}

\vspace*{1.5pc} \noindent \normalsize\textbf{\Large {The down/up crossing properties of weighted Markov branching processes}} \hfill

\vspace{12pt} \hspace*{0.75pc}{\small\textrm{\uppercase{Yanyun Li
}}}\hspace{-2pt}$^{*}$, {\small\textit{Central South University }}

\hspace*{0.75pc}{\small\textrm{\uppercase{Junping Li}}}
\hspace{-2pt}$^{**}$, {\small\textit{Central South University }}

\hspace*{0.75pc}{\small\textrm{\uppercase{Anyue Chen}}}
\hspace{-2pt}$^{***}$, {\small\textit{Southern University of Science and Technology}}


\par
\footnote{\hspace*{-0.75pc}$^{*}\,$Postal address:
 School of Mathematics and Statistics, Central
South University, Changsha, 410083, China. E-mail address:
2310962168@qq.com}

\par
\footnote{\hspace*{-0.75pc}$^{**}\,$Postal
address: School of Mathematics and Statistics, Central
South University, Changsha, 410083, China. E-mail address:
jpli@mail.csu.edu.cn}

\par
\footnote{\hspace*{-0.75pc}$^{***}\,$Postal
address: Department of Mathematics, Southern
University of Science and Technology, Shenzhen, 518055, China. E-mail address:
chenay@sustc.edu.cn}
\par
\renewenvironment{abstract}{%
\vspace{8pt} \vspace{0.1pc} \hspace*{0.25pc}
\begin{minipage}{14cm}
\footnotesize
{\bf Abstract}\\[1ex]
\hspace*{0.5pc}} {\end{minipage}}
\begin{abstract}
  We consider the weighted Markov branching processes which stops at state $0$. The joint probability distribution of fixed range crossing numbers of such processes is obtained by using a new method. In particular, the probability distribution of total death number is given for Markov branching process and the joint probability distributions of the total number of customers who ever been served and the total number of customers who ever entered the system are also given for bulk-arrival queueing systems.
\end{abstract}

\vspace*{12pt}
\parbox[b]{26.75pc}{{
}}
{\footnotesize {\bf Keywords:}
Weighted Markov branching process; Down crossing; Up crossing; Joint probability distribution.}
\par
\normalsize

\renewcommand{\amsprimary}[1]{
\vspace*{8pt}
\hspace*{2.25pc}
\parbox[b]{20.75pc}{\scriptsize
AMS 2000 Subject Classification: Primary 60J27 Secondary 60J35
     {\uppercase{#1}}}\par\normalsize}
\renewcommand{\ams}[2]{
\vspace*{8pt}
\parbox[b]{24.75pc}{\scriptsize
     AMS 2000 SUBJECT CLASSIFICATION: PRIMARY
     {\uppercase{#1}}\\ \phantom{
     AMS 2000
     SUBJECT CLASSIFICATION:
     }
SECONDARY
 {\uppercase{#2}}}\par\normalsize}

\ams{60J27}{60J35}

\par
\vspace{5mm}
 \setcounter{section}{1}
 \setcounter{equation}{0}
 \setcounter{theorem}{0}
 \setcounter{lemma}{0}
 \setcounter{corollary}{0}
\noindent {\large \bf 1. Introduction}
\vspace{3mm}
\par
The ordinary Markov branching processes (MBPs) play an important role in the
classical field of stochastic processes. Some related references are Harris\cite{Harr63}, Athreya and Ney\cite{AKN72}, Asmussen and Hering\cite{ASHH83}.
\par
The basic property governing the evolution of an MBP is the branching property, different particles act independently when giving birth or death. In most realistic situation, however, the independence property is unlikely to be realized. Indeed, particles usually interact with each other. This is the main reason why there always have been an increasing effort to generalise the ordinary branching processes to more general branching models (see, for instance, Athreya and Jagers \cite{AKJP97}). Models like this were first studied by Sevast'yanov \cite{SBA49}. Some authors, including Vatutin \cite{VVA74}, Li $\&$ Chen\cite{LJCA06} and Li, Chen $\&$ Pakes \cite{LCP12} considered the branching process with state-independent immigration. Moreover, Li $\&$ Liu \cite{LJL11} added state-independent migration to the above branching process. Yamazato \cite{YM75} investigated a branching process with immigration which only occurs at state zero. Being viewed as an extension of Yamazato's model, Chen~\cite{RC97} discussed a more general branching process with or without resurrection. For the further discussion of this model, see Chen \cite{CA02} \cite{CA2002}, Chen, Li $\&$ Ramesh\cite{CLR2005} and Chen, Pollett, Zhang $\&$ Li \cite{CPLZ07} considered weighted Markov branching process. Within this structure, Chen, Li and Ramesh \cite{CLR2005} considered the uniqueness and extinction of weighted Markov branching processes, which is the further consideration of branching models discussed in Chen\cite{CA2002}.
\par
It is well-known that $0$ is the absorbing state for a branching system. Each particle in the system lives a random long time and gives a random number of new particles at its death time. The system stops when there is no particle in it. For such processes, it is interesting and also important to discuss the total number of particles who ever lived in the system (the total death number) or the total number of times that a particle in the system gave $m$ new particles at its death time (here $m\neq 0$ is fixed). For convenience, such number is called fixed range crossing number henceforth. In particular, the $-1$-range crossing number (down crossing number) is just the total death number for the process until its extinction and if $m>0$, then the $m$-range crossing (up crossing number) is just the total number of times that a particle in the system gave $m$ new particles at its death time. However, such problems are not considered in current references and still remain open. Since the down/up crossing numbers are random variables, therefore, it needs to discuss the probability distribution of $m$-range crossing number for the process until its extinction. The main purpose of this paper is to consider such problems for weighted Markov branching processes.
\par
In order to begin our discussion, we first define our model by specifying the infinitesimal generator, i.e., the so-called $Q$-matrix. Throughout this paper, let $\mathbb{Z}_+=\{0,1,2,\cdots \}$.
\par
\noindent
\begin{definition}\label{def1.1}\
A $Q$-matrix $Q=(q_{ij};\ i,j\in \mathbb{Z}_+)$ is called a weighted branching $Q$-matrix ( henceforth referred to as a WMB $Q$-matrix), if
\begin{eqnarray} \label{eq1.1}
 q_{ij}=\begin{cases}w_ib_{j-i+1},\ if \ i\geq 1, j\geq i-1, \\
                0 , \ \ \ \ \ \mbox{otherwise},
 \end{cases}
 \end{eqnarray}
 where
\begin{eqnarray} \label{eq1.2}
           b_j\geq 0 \ (j\neq 1),\  0<-b_1=\sum_{j\neq 1}b_j<\infty,\ w_i>0\ (i\geq 1).
\end{eqnarray}
\end{definition}
\par
\begin{definition}\label{def1.2}\
A weighted Markov branching process ( henceforth referred to as a WMBP) is a continuous-time Markov chain with state space $\mathbb{Z}_+$ whose transition function $P(t)=(p_{ij}(t);$ $i,j\in \mathbb{Z}_+)$ satisfies
\begin{eqnarray}\label{eq1.3}
p'_{ij}(t)=\sum_{k=0}^{\infty}p_{ik}(t)q_{kj},\ i\geq0,\ j\geq1,\
t\geq 0,
\end{eqnarray}
where $Q=(q_{ij};\ i,j\in \mathbb{Z}_+)$ is defined in (\ref{eq1.1})-(\ref{eq1.2}).
\end{definition}
\par
Chen, Li $\&$ Ramesh~\cite{CLR2005} derived the regularity conditions for WMBPs in terms of the death
rate $b_0$ and birth rates $\{b_k; k\geq 2\}$. Therefore, we assume the process is regular in the following.
\par
\vspace{5mm}
 \setcounter{section}{2}
 \setcounter{equation}{0}
 \setcounter{theorem}{0}
 \setcounter{lemma}{0}
 \setcounter{definition}{0}
 \setcounter{corollary}{0}
\noindent {\large \bf 2. Preliminaries}
 \vspace{3mm}
\par
Let $\mathbb{N}\subset \mathbb{Z}_+$ be a finite subset with $1\notin \mathbb{N}$ and $b_k>0$ for all $k\in \mathbb{N}$. The number of elements in $\mathbb{N}$ is denoted by $N$, i.e., $N=|\mathbb{N}|$. We will consider the $(\mathbb{N}-1)$-range crossing number of the process until its extinction, i.e., the joint probability distribution of the $N$-dimensional random vector $(N_i;\ i\in \mathbb{N})$, where $N_i$ is the $(i-1)$-range crossing number of the process until its extinction.
\par
In order to begin our discussion, define
\begin{eqnarray}\label{eq2.1}
  B(u)=\sum_{j=0}^{\infty}b_ju^j
\end{eqnarray}
and
\begin{eqnarray}\label{eq2.2}
B_{\mathbb{N}}(u,\emph{\textbf{v}})=\sum_{j\in \mathbb{N}}b_jv_j u^j,\ \ \ \bar{B}_{\mathbb{N}}(u)=\sum_{j\in \mathbb{N}^c}b_ju^j,
\end{eqnarray}
where $\emph{\textbf{v}}=(v_j;\ j\in \mathbb{N})$. It is obvious that $B(u),\ \bar{B}_{\mathbb{N}}(u)$ are well defined at least on $[0,1]$, and $B_{\mathbb{N}}(u,\emph{\textbf{v}})$ is well defined at least on $[0,1]^{N+1}$.
\par
The following lemma is due to mathematical analysis and thus the proof is omitted here.
\par
\begin{lemma} \label{le2.2}
Suppose that $\{f_{\textbf{k}};\ \textbf{k}\in \mathbb{Z}_+^N\}$ is a sequence on $\mathbb{Z}_+^N$, $F(\textbf{v})=\sum_{\textbf{k}\in \mathbb{Z}_+^N}\textbf{v}^{\textbf{k}}$ is the generating function of $\{f_{\textbf{k}};\ \textbf{k}\in \mathbb{Z}_+^N\}$. Then for any $j\in \mathbb{Z}_+$,
\begin{eqnarray}\label{eq2.6}
F^j(\textbf{v})=\sum_{\textbf{l}\in \mathbb{Z}_+^N}f^{*(j)}_{\textbf{l}}\textbf{v}^{\textbf{l}}
\end{eqnarray}
where
\begin{eqnarray*}
f^{*(0)}_{\textbf{0}}=1,\ f^{*(0)}_{\textbf{l}}=0\ (\textbf{l}\neq \textbf{0}),\ \ f^{*(j)}_{\textbf{l}}=\sum_{\textbf{k}^{(1)}+\cdots+\textbf{k}^{(j)}=\textbf{l} }f_{\textbf{k}^{(1)}}\cdots f_{\textbf{k}^{(j)}},\ (j\geq 1)
\end{eqnarray*}
is the $j$'th convolution of $\{f_{\textbf{k}};\ \textbf{k}\in \mathbb{Z}_+^N\}$.
\end{lemma}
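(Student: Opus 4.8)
The plan is to proceed by induction on $j$, exploiting the multiplicative identity $F^{j+1}(\textbf{v})=F^{j}(\textbf{v})\,F(\textbf{v})$ together with the recursive structure of the multivariate convolution. This reduces the whole statement to a single step relating the $(j+1)$-fold convolution to the $j$-fold one, which is the natural way to handle a power of a generating function.

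First I would dispose of the base cases. For $j=0$ the convention $F^{0}(\textbf{v})=1$ must be matched against $\sum_{\textbf{l}\in\mathbb{Z}_+^N}f^{*(0)}_{\textbf{l}}\textbf{v}^{\textbf{l}}$, which collapses to $f^{*(0)}_{\textbf{0}}=1$ by the stated initial values; and $j=1$ is merely the definition of $F(\textbf{v})$, together with the observation that $f^{*(1)}_{\textbf{l}}=\sum_{\textbf{k}^{(1)}=\textbf{l}}f_{\textbf{k}^{(1)}}=f_{\textbf{l}}$.

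Next, for the inductive step I would assume the representation for a fixed $j\ge 1$, substitute both power series into $F^{j+1}=F^{j}\cdot F$, and form the Cauchy product, so that the coefficient of $\textbf{v}^{\textbf{l}}$ becomes $\sum_{\textbf{m}+\textbf{k}=\textbf{l}}f^{*(j)}_{\textbf{m}}\,f_{\textbf{k}}$. The core combinatorial identity to check is that this coincides with $f^{*(j+1)}_{\textbf{l}}$. This follows by grouping the defining $(j+1)$-fold sum for $f^{*(j+1)}_{\textbf{l}}$ according to the partial sum $\textbf{m}=\textbf{k}^{(1)}+\cdots+\textbf{k}^{(j)}$ of the first $j$ multi-indices: the inner sum over $\textbf{k}^{(1)}+\cdots+\textbf{k}^{(j)}=\textbf{m}$ reproduces $f^{*(j)}_{\textbf{m}}$, while the outer constraint $\textbf{m}+\textbf{k}^{(j+1)}=\textbf{l}$ supplies the remaining factor $f_{\textbf{k}^{(j+1)}}$, giving exactly the Cauchy-product coefficient displayed above.

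The hard part will be the analytic justification for rearranging the double series into the single Cauchy product indexed by $\textbf{l}$. Since every multi-index ranges over $\mathbb{Z}_+^N$ and, in the intended application, the coefficients $f_{\textbf{k}}$ are nonnegative, Tonelli's theorem for nonnegative double series permits the interchange of summation order without any convergence caveat; alternatively, on the interior of $[0,1]^N$ the generating functions converge absolutely, so the Cauchy-product (Mertens) theorem applies. This routine piece of analysis is presumably what the authors have in mind when they remark that the lemma ``is due to mathematical analysis,'' so a full writeup would simply record the induction above and cite absolute convergence for the reordering.
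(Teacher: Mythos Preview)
Your induction argument is correct and is exactly the standard approach one would expect for such a statement. The paper itself omits the proof entirely, remarking only that the lemma ``is due to mathematical analysis and thus the proof is omitted here,'' so there is no alternative approach to compare against; your writeup simply fills in what the authors left implicit.
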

\par
The function $\bar{B}_{\mathbb{N}}(u)+B_{\mathbb{N}}(u,\emph{\textbf{v}})$ will play an important role in our discussion. The following theorem reveals its properties.
\par
\begin{theorem}\label{th2.1}
{\rm{(i)}}\ For any $\textbf{v}\in[0,1]^{N+1}$,
\begin{eqnarray}\label{eq2.7}
\bar{B}_{\mathbb{N}}(u)+B_{\mathbb{N}}(u,\textbf{v})=0
\end{eqnarray}
 possesses at most $2$ roots in $[0,1]$. The minimal nonnegative root of $\bar{B}_{\mathbb{N}}(u)+B_{\mathbb{N}}(u,\textbf{v})=0$ is denoted by $\rho(\textbf{v})$, then $\rho(\textbf{v})\leq \rho$, where $\rho$ is the minimal nonnegative root of $B(u)=0$.
\par
{\rm{(ii)}}\ $\rho(\textbf{v})\in C^{\infty}([0,1)^{N})$ and $\rho(\textbf{v})$ can be expanded as a multivariate Taylor sieris
\begin{eqnarray}\label{eq2.8}
\rho(\textbf{v})=\sum_{\textbf{k}\in \mathbb{Z}_+^N}\rho_{\textbf{k}}\textbf{v}^{\textbf{k}}.
\end{eqnarray}
where $\rho_{\textbf{k}}\geq 0, \forall\ \textbf{k}\in \mathbb{Z}_+^N$.
\end{theorem}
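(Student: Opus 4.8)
The plan is to treat $g(u,\textbf{v}):=\bar{B}_{\mathbb{N}}(u)+B_{\mathbb{N}}(u,\textbf{v})$, for fixed $\textbf{v}$, as a power series in $u$ and to exploit the fact that, exactly as for the ordinary branching function $B$, all its coefficients are nonnegative except the coefficient of $u$. Writing $g(u,\textbf{v})=\sum_{j\ge0}c_ju^j$ with $c_j=b_j$ for $j\in\mathbb{N}^c$ and $c_j=b_jv_j$ for $j\in\mathbb{N}$, the only negative coefficient is $c_1=b_1$ (recall $1\in\mathbb{N}^c$), while $c_j\ge0$ for $j\ne1$. Hence $\partial_u^2 g(u,\textbf{v})=\sum_{j\ge2}j(j-1)c_ju^{j-2}\ge0$ on $[0,1]$, so $g(\cdot,\textbf{v})$ is convex there. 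A convex function has at most two zeros in an interval unless it vanishes on a subinterval, which the nonconstant real-analytic function $g(\cdot,\textbf{v})$ (note $c_1=b_1\ne0$) cannot. This gives the first assertion of (i).

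For the bound $\rho(\textbf{v})\le\rho$ I would compare $g$ with $B$. Since $v_j\le1$ and $b_j>0$ for $j\in\mathbb{N}$,
\[
B(u)-g(u,\textbf{v})=\sum_{j\in\mathbb{N}}b_j(1-v_j)u^j\ge0,\qquad u\in[0,1],
\]
so $g(\cdot,\textbf{v})\le B$ on $[0,1]$. In particular $g(\rho,\textbf{v})\le B(\rho)=0$, while the constant term satisfies $g(0,\textbf{v})\ge0$. If $g(0,\textbf{v})=0$ then $\rho(\textbf{v})=0$; otherwise the intermediate value theorem produces a zero of $g(\cdot,\textbf{v})$ in $(0,\rho]$. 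In either case the minimal nonnegative root obeys $\rho(\textbf{v})\le\rho$, completing (i).

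For the smoothness in (ii) I would invoke the implicit function theorem for $g(u,\textbf{v})=0$. Fix $\textbf{v}\in[0,1)^{N}$. Because $b_j>0$ for $j\in\mathbb{N}$, the strict inequalities $v_j<1$ give
\[
g(1,\textbf{v})=\sum_{j\in\mathbb{N}^c}b_j+\sum_{j\in\mathbb{N}}b_jv_j=-\sum_{j\in\mathbb{N}}b_j(1-v_j)<0,
\]
so $1$ is not a root and $\rho(\textbf{v})<1$. At the minimal root convexity forces $\partial_u g(\rho(\textbf{v}),\textbf{v})<0$: were it $\ge0$, then $\rho(\textbf{v})$ would be a global minimizer and $g(\cdot,\textbf{v})\ge0$ on $[0,1]$, contradicting $g(1,\textbf{v})<0$. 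Since $g(u,\textbf{v})$ is a convergent power series on $[0,1)\times[0,1]^{N}$, hence jointly $C^{\infty}$, and $\partial_u g\ne0$ at $(\rho(\textbf{v}),\textbf{v})$, the implicit function theorem yields $\rho\in C^{\infty}([0,1)^{N})$ together with the Taylor expansion (\ref{eq2.8}).

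The nonnegativity $\rho_{\textbf{k}}\ge0$ is the delicate point, and I would obtain it from a monotone-iteration representation rather than from the implicit function theorem. Dividing $g(u,\textbf{v})=0$ by $-b_1>0$ rewrites the equation as the fixed point equation $u=\Phi(u,\textbf{v})$, where
\[
\Phi(u,\textbf{v})=\frac{1}{-b_1}\Big(\sum_{j\in\mathbb{N}^c,\,j\ne1}b_ju^j+\sum_{j\in\mathbb{N}}b_jv_ju^j\Big)
\]
is a power series in $(u,\textbf{v})$ with nonnegative coefficients. Thus $\rho(\textbf{v})$ is the minimal nonnegative fixed point of the increasing map $\Phi(\cdot,\textbf{v})$, and the standard iteration $u_0=0,\ u_{n+1}=\Phi(u_n,\textbf{v})$ satisfies $u_n\uparrow\rho(\textbf{v})$. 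By Lemma \ref{le2.2} and induction each $u_n$ is a power series in $\textbf{v}$ with nonnegative coefficients, and since $\Phi$ preserves the coefficientwise order these coefficients increase with $n$; letting $n\to\infty$ identifies each $\rho_{\textbf{k}}$ as the (nonnegative) limit of the corresponding coefficients and gives $\sum_{\textbf{k}}\rho_{\textbf{k}}\textbf{v}^{\textbf{k}}=\rho(\textbf{v})$ on $[0,1)^{N}$, which by uniqueness of the Taylor expansion is precisely (\ref{eq2.8}). The main obstacle is this last step: reconciling the coefficientwise monotone limit with the analytic Taylor coefficients and justifying the interchange of limits, for which the uniform bounds $0\le u_n\le\rho(\textbf{v})\le\rho\le1$ and monotone convergence on $[0,1)^{N}$ are the key tools.
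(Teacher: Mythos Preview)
Your proof is correct, and for part (i) and the smoothness in (ii) it is essentially the paper's argument made self-contained: the paper observes $\bar B_{\mathbb N}(u)+B_{\mathbb N}(u,\textbf v)\le B(u)$ and then cites Li and Chen \cite{Li-Chen08} for the at-most-two-roots property, whereas you supply the underlying convexity argument directly; both then apply the implicit function theorem after checking the $u$-derivative does not vanish.

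The genuine divergence is in proving $\rho_{\textbf k}\ge0$. The paper substitutes the formal series $\rho(\textbf v)=\sum_{\textbf k}\rho_{\textbf k}\textbf v^{\textbf k}$ into the defining equation, uses Lemma~\ref{le2.2} to expand the powers, and by matching coefficients derives the explicit recursion
\[
\rho_{\textbf l+\textbf e_k}=-\frac{1}{\bar B'_{\mathbb N}(\rho_{\textbf 0})}\Big(\sum_{j\in\mathbb N^c\setminus\{1\}}b_j\!\!\sum_{\substack{\textbf l^{(1)}+\cdots+\textbf l^{(j)}=\textbf l+\textbf e_k\\ \textbf l^{(i)}\cdot\textbf 1\le \textbf l\cdot\textbf 1}}\!\!\rho_{\textbf l^{(1)}}\cdots\rho_{\textbf l^{(j)}}+b_k\rho^{*(k)}_{\textbf l}\Big),
\]
then proves nonnegativity by induction on $\textbf l\cdot\textbf 1$ using $\bar B'_{\mathbb N}(\rho_{\textbf 0})<0$. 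Your monotone fixed-point iteration $u_0=0$, $u_{n+1}=\Phi(u_n,\textbf v)$ is more conceptual and avoids the bookkeeping, and the limit argument you flag as the ``main obstacle'' is in fact routine: the coefficients increase and are bounded (since $u_n\le1$), so their limit defines a power series equal to $\rho(\textbf v)$ on $[0,1)^N$, whose coefficients are then the Taylor coefficients. The trade-off is that the paper's recursion is not a by-product of your argument, and that recursion is exactly formula~(\ref{eq3.6}) used in Theorem~\ref{th3.1} to describe the crossing-number distribution; if you adopt your route here you will need to derive the recursion separately when it is needed later.
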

\begin{proof}
 Note that $0\leq B_{\mathbb{N}}(u,\emph{\textbf{0}})\leq B_{\mathbb{N}}(u,\emph{\textbf{v}})\leq B_{\mathbb{N}}(u,\emph{\textbf{1}})$, we know that
\begin{eqnarray*}
\bar{B}_{\mathbb{N}}(u)+B_{\mathbb{N}}(u,\emph{\textbf{v}})\leq B(u).
\end{eqnarray*}
{\rm{(i)}} follows from Li and Chen \cite{Li-Chen08}. Next to prove {\rm{(ii)}}. Without loss of generality, suppose that $\mathbb{N}=\{2,3\}$ and thus
\begin{eqnarray*}
B_{\mathbb{N}}(u,\emph{\textbf{v}})=B_{\mathbb{N}}(u,v_2,v_3).
\end{eqnarray*}
Denote
$$
f(u,v_2,v_3)=B_{\mathbb{N}}(u,v_2,v_3)\ \ \ \rm{and}\ \ g(u)=\bar{B}_{\mathbb{N}}(u).
$$
Then,
$$
g(\rho(v_2,v_3))+f(\rho(v_2,v_3),v_2,v_3)\equiv 0
$$
and hence
\begin{eqnarray*}
\begin{cases}
g'_u(\rho(v_2,v_3))\cdot \rho'_{v_2}(v_2,v_3)+f'_u(\rho(v_2,v_3),v_2,v_3)\cdot\rho'_{v_2}(v_2,v_3)+f'_{v_2}(\rho(v_2,v_3),v_2,v_3)\equiv 0\\
g'_u(\rho(v_2,v_3))\cdot \rho'_{v_3}(v_2,v_3)+f'_u(\rho(v_2,v_3),v_2,v_3)\cdot\rho'_{v_3}(v_2,v_3)+f'_{v_3}(\rho(v_2,v_3),v_2,v_3)\equiv 0,
\end{cases}
\end{eqnarray*}
which implies that $\rho'_{v_2}$ and $\rho'_{v_3}$ are well defined in $[0,1)^2$ since
\begin{eqnarray*}
g'_u(\rho(v_2,v_3))+f'_u(\rho(v_2,v_3),v_2,v_3)< g'_u(\rho)+f'_u(\rho,1,1)=B'(\rho)\leq 0
\end{eqnarray*}
By recursion method, we know that $\rho(v_2,v_3)\in C^{\infty}([0,1)^2)$.
\par
Suppose that
\begin{eqnarray*}
\rho(\emph{\textbf{v}})=\sum_{\emph{\textbf{k}}\in \mathbb{Z}_+^N}\rho_{\emph{\textbf{k}}}\emph{\textbf{v}}^{\emph{\textbf{k}}}.
\end{eqnarray*}
\par
Substituting the above expression of $\rho(\emph{\textbf{v}})$ into (\ref{eq2.7}) yields
\begin{eqnarray*}
0& \equiv & \bar{B}_{\mathbb{N}}(\rho(\emph{\textbf{v}}))+B_{\mathbb{N}}(\rho(\emph{\textbf{v}}),\emph{\textbf{v}})\\
&=& \sum_{j\in \mathbb{N}^c}b_j(\rho(\emph{\textbf{v}}))^j+\sum_{j\in \mathbb{N}}b_j(\rho(\emph{\textbf{v}}))^jv_j\\
&=& \sum_{j\in \mathbb{N}^c}b_j\sum_{\emph{\textbf{l}}\geq \emph{\textbf{0}}}\rho^{*(j)}_{\emph{\textbf{l}}}
\emph{\textbf{v}}^{\emph{\textbf{l}}}+\sum_{j\in \mathbb{N}}b_j\sum_{\emph{\textbf{l}}\geq \emph{\textbf{0}}}\rho^{*(j)}_{\emph{\textbf{l}}}
\emph{\textbf{v}}^{\emph{\textbf{l}}+\emph{\textbf{e}}_j}\\
&=& \sum_{\emph{\textbf{l}}\geq \emph{\textbf{0}}}(\sum_{j\in \mathbb{N}^c}b_j\rho^{*(j)}_{\emph{\textbf{l}}})
\emph{\textbf{v}}^{\emph{\textbf{l}}}+\sum_{j\in \mathbb{N}}b_j\sum_{\emph{\textbf{l}}\geq \emph{\textbf{0}}}\rho^{*(j)}_{\emph{\textbf{l}}}
\emph{\textbf{v}}^{\emph{\textbf{l}}+\emph{\textbf{e}}_j}.
\end{eqnarray*}
\par
We next prove $\rho_{\emph{\textbf{l}}}\geq 0$ using mathematical induction respect to $\emph{\textbf{l}}\cdot \emph{\textbf{1}}$. If $\emph{\textbf{l}}\cdot \emph{\textbf{1}}=0$, then $\rho_{\emph{\textbf{0}}}=\rho(\emph{\textbf{0}})\geq 0$ since it is the minimal nonnegative root of $\bar{B}_{\mathbb{N}}(u)+B_{\mathbb{N}}(u,\emph{\textbf{0}})=0$. If $\emph{\textbf{l}}\cdot \emph{\textbf{1}}=1$, i.e., $\emph{\textbf{l}}=\emph{\textbf{e}}_k$ for some $k\in \mathbb{N}$.
Then,
\begin{eqnarray*}
\sum_{j\in \mathbb{N}^c}b_j\rho^{*(j)}_{\emph{\textbf{e}}_k}+b_k\rho^{*(k)}_{\emph{\textbf{0}}}
=0,
\end{eqnarray*}
i.e.,
\begin{eqnarray*}
\sum_{j\in \mathbb{N}^c}b_jj\rho^{j-1}_{\emph{\textbf{0}}}\rho_{\emph{\textbf{e}}_k}+b_k\rho^{k}_{\emph{\textbf{0}}}
=0.
\end{eqnarray*}
Hence
\begin{eqnarray*}
\rho_{\emph{\textbf{e}}_k}
=-\frac{b_k\rho^{k}_{\emph{\textbf{0}}}}{\bar{B}'_{\mathbb{N}}(\rho_{\emph{\textbf{0}}})}\geq 0,\ \ k\in \mathbb{N},
\end{eqnarray*}
since $\bar{B}'_{\mathbb{N}}(\rho_{\emph{\textbf{0}}})<0$.
\par
Assume $\rho_{\emph{\textbf{l}}}\geq 0$ for $\emph{\textbf{l}}$ satisfying $\emph{\textbf{l}}\cdot \emph{\textbf{1}}\leq m$, then for $\bar{\emph{\textbf{l}}}\cdot \emph{\textbf{1}}=m+1$, there exists $\emph{\textbf{l}}$ and $k\in \mathbb{N}$ such that $\bar{\emph{\textbf{l}}}=\emph{\textbf{l}}+\emph{\textbf{e}}_k$ and $\emph{\textbf{l}}\cdot \emph{\textbf{1}}\leq m$, therefore,
\begin{eqnarray*}
\sum_{j\in \mathbb{N}^c}b_j\rho^{*(j)}_{\emph{\textbf{l}}+\emph{\textbf{e}}_k}
+b_k\rho^{*(k)}_{\emph{\textbf{l}}}=0,
\end{eqnarray*}
i.e.,
\begin{eqnarray*}
\sum_{j\in \mathbb{N}^c}b_jj\rho^{j-1}_{\emph{\textbf{0}}}\rho_{\emph{\textbf{l}}+\emph{\textbf{e}}_k}
+\sum_{j\in \mathbb{N}^c\setminus\{1\}}b_j\sum_{\emph{\textbf{l}}^{(1)}+\cdots +\emph{\textbf{l}}^{(j)}=\emph{\textbf{l}}+\emph{\textbf{e}}_k,\ \emph{\textbf{l}}^{(1)}\cdot\emph{\textbf{1}},\cdots ,\emph{\textbf{l}}^{(j)}\cdot \emph{\textbf{1}}\leq m}\rho_{\emph{\textbf{l}}^{(1)}}
\cdots \rho_{\emph{\textbf{l}}^{(j)}}+b_k\rho^{*(k)}_{\emph{\textbf{l}}}=0.
\end{eqnarray*}
Hence
\begin{eqnarray*}
\rho_{\bar{\emph{\textbf{l}}}}=\rho_{\emph{\textbf{l}}+\emph{\textbf{e}}_k}
=-\frac{\sum_{j\in \mathbb{N}^c\setminus\{1\}}b_j\sum_{\emph{\textbf{l}}^{(1)}+\cdots +\emph{\textbf{l}}^{(j)}=\emph{\textbf{l}}+\emph{\textbf{e}}_k,\ \emph{\textbf{l}}^{(1)}\cdot\emph{\textbf{1}},\cdots ,\emph{\textbf{l}}^{(j)}\cdot \emph{\textbf{1}}\leq m}\rho_{\emph{\textbf{l}}^{(1)}}
\cdots \rho_{\emph{\textbf{l}}^{(j)}}+b_k\rho^{*(k)}_{\emph{\textbf{l}}}}{\bar{B}'_{\mathbb{N}}(\rho_{\emph{\textbf{0}}})}\geq 0,
\end{eqnarray*}
since $\bar{B}'_{\mathbb{N}}(\rho_{\emph{\textbf{0}}})<0$.
By mathematical induction, we know that $\rho_{\emph{\textbf{l}}}\geq 0, \forall\ \emph{\textbf{l}}\in \mathbb{Z}_+^N$. The proof is complete. \hfill $\Box$
\end{proof}

\par
\vspace{5mm}
 \setcounter{section}{3}
 \setcounter{equation}{0}
 \setcounter{theorem}{0}
 \setcounter{lemma}{0}
 \setcounter{definition}{0}
 \setcounter{corollary}{0}
\noindent {\large \bf 3. Down/up crossing property}
 \vspace{3mm}
\par
In this section, we consider the down/up crossing properties of weighted Markov branching process.
\par
Let $\mathbb{N}\subset \mathbb{Z}_+$ be a finite subset with $1\notin \mathbb{N}$ and $b_k>0$ for all $k\in \mathbb{N}$. $N=|\mathbb{N}|$ denotes the number of elements in $\mathbb{N}$.
\par
 The main purpose of this paper is to count the $(\mathbb{N}-1)$-range crossing numbers. However, the WMBP itself can not reveal such crossing numbers directly. Therefore, we need to find a new method to discuss the property of such crossing numbers. For this purpose, we construct a new $Q$-matrix $\tilde{Q}=(q_{(i,\emph{\textbf{k}}), (j,\emph{\textbf{l}})};\ (i,\emph{\textbf{k}}),$ $ (j,\emph{\textbf{l}})\in \mathbb{Z}_+^{N+1} )$.
\begin{eqnarray}\label{eq3.1}
 q_{(i,\emph{\textbf{k}}), (j,\emph{\textbf{l}})}=\begin{cases}w_ib_{j-i+1},\ \ \ if \ i\geq 1, j-i+1\in \mathbb{N}^c, \emph{\textbf{l}}=\emph{\textbf{k}} \\
                   w_ib_{j-i+1}, \ \ \ if \ i\geq 1, j-i+1\in \mathbb{N}, \emph{\textbf{l}}=\emph{\textbf{k}}+\emph{\textbf{e}}_{j-i+1}\\
                0 , \ \ \ \ \ \ \ \ \ \ \ \ otherwise.
                     \end{cases}
\end{eqnarray}
\par
Therefore, $\tilde{Q}$ determines a $(N+1)$-dimensional Markov chain $(X(t), \emph{\textbf{Y}}(t))$, where $X(t)$ is the weighted Markov branching process, $\emph{\textbf{Y}}(t)=(Y_k(t);k\in \mathbb{N})$ (assume $Y_k(0)=0\ (k\in \mathbb{N})$) counts the $(\mathbb{N}-1)$-range crossing numbers until time $t$. In particular,
\par
(i)\ if $\mathbb{N}=\{0\}$ then $Y_0(t)$ counts the down crossing number (i.e., the death number) of $\{X(t): t\geq 0\}$ until time $t$.
\par
(ii)\ If $\mathbb{N}=\{m\}\ (m\geq 2)$, then $Y_m(t)$ counts the $(m-1)$-range up crossing number of $\{X(t): t\geq 0\}$ until time $t$.
\par
(iii)\ If $\mathbb{N}=\{0,m\}\ (m\geq 2)$, then $\emph{\textbf{Y}}(t)=(Y_0(t),Y_m(t))$ counts the death number and the $(m-1)$-range up corssing number of $\{X(t): t\geq 0\}$ until time $t$.
\par
Let $P(t)=(p_{(i,\emph{\textbf{k}}), (j,\emph{\textbf{l}})}(t); (i,\emph{\textbf{k}}), (j,\emph{\textbf{l}})\in \mathbb{Z}_+^{N+1})$ denote the transition probability of $(X(t),\emph{\textbf{Y}}(t))$.
\begin{lemma}\label{le3.1}
For $P(t)$, we have
\begin{eqnarray}\label{eq3.2}
\sum_{(j,\textbf{l})\in \mathbb{Z}_+^{N+1}}p'_{(i,\textbf{0}),
 (j,\textbf{l})}(t)u^j\textbf{v}^{\textbf{l}}=[\bar{B}_{\mathbb{N}}(u)+B_{\mathbb{N}}(u,\textbf{v})]\cdot\sum_{j\geq 1,\textbf{k}\in \mathbb{Z}_+^N}p_{(i,\textbf{0}),
 (j,\textbf{k})}(t)w_ju^{j-1}\textbf{v}^{\textbf{k}}
\end{eqnarray}
where $\bar{B}_{\mathbb{N}}(u),\ B_{\mathbb{N}}(u,\textbf{v})$ are defined in $(\ref{eq2.2})$, $\textbf{v}^{\textbf{l}}=\prod_{k\in \mathbb{N}}v_k^{l_k}$. Moreover,
\begin{eqnarray}\label{eq3.3}
 && \sum_{(j,\textbf{l})\in \mathbb{Z}_+^{N+1}}p_{(i,\textbf{0}),
 (j,\textbf{l})}(t)u^j\textbf{v}^{\textbf{l}}-u^i\nonumber\\
 &=& [\bar{B}_{\mathbb{N}}(u)+B_{\mathbb{N}}(u,\textbf{v})]\cdot\sum_{j\geq 1,\textbf{k}\in \mathbb{Z}_+^N}(\int_0^tp_{(i,\textbf{0}),
 (j,\textbf{k})}(s)ds)\cdot w_ju^{j-1}\textbf{v}^{\textbf{k}}.
\end{eqnarray}
\end{lemma}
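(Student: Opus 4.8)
The plan is to read off (3.2) directly from the Kolmogorov forward equations for the augmented chain $(X(t),\textbf{Y}(t))$, and then to obtain (3.3) by integrating (3.2) over $[0,t]$ and invoking the initial condition.

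First I would record that the matrix $\tilde Q$ in (3.1) has the same total exit rate out of $(i,\textbf{k})$ as the WMB $Q$-matrix has out of $i$, namely $-w_i b_1=w_i\sum_{j\neq 1}b_j$, because the coordinates $\textbf{Y}$ are pure counters: they increment only when $X$ jumps and never alter the jump rates. Hence the jump times of $(X(t),\textbf{Y}(t))$ coincide with those of $X(t)$, so non-explosion is inherited from the assumed regularity of the WMBP. Consequently $\tilde Q$ is regular, the minimal transition function $P(t)$ is honest, and it satisfies the forward equation
\[
p'_{(i,\textbf{0}),(j,\textbf{l})}(t)=\sum_{(r,\textbf{s})\in\mathbb{Z}_+^{N+1}}p_{(i,\textbf{0}),(r,\textbf{s})}(t)\,q_{(r,\textbf{s}),(j,\textbf{l})}.
\]

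Next I would multiply this identity by $u^j\textbf{v}^{\textbf{l}}$, sum over $(j,\textbf{l})\in\mathbb{Z}_+^{N+1}$, and interchange the order of summation; this is justified on $[0,1]^{N+1}$ because the $p$'s are nonnegative and $|u|,|v_k|\le 1$, so the double series is absolutely convergent. The task reduces to evaluating the inner sum $\sum_{(j,\textbf{l})}q_{(r,\textbf{s}),(j,\textbf{l})}u^j\textbf{v}^{\textbf{l}}$ for a fixed $(r,\textbf{s})$ with $r\ge 1$. Splitting $\tilde Q$ into its two nonzero cases and substituting $n=j-r+1$ gives, for the $\mathbb{N}^c$ part (where $\textbf{l}=\textbf{s}$), the value $w_r u^{r-1}\textbf{v}^{\textbf{s}}\sum_{n\in\mathbb{N}^c}b_n u^n=w_r u^{r-1}\textbf{v}^{\textbf{s}}\bar B_{\mathbb{N}}(u)$, and for the $\mathbb{N}$ part (where $\textbf{l}=\textbf{s}+\textbf{e}_n$), the value $w_r u^{r-1}\textbf{v}^{\textbf{s}}\sum_{n\in\mathbb{N}}b_n u^n v_n=w_r u^{r-1}\textbf{v}^{\textbf{s}}B_{\mathbb{N}}(u,\textbf{v})$. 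Adding these and factoring out the common bracket $[\bar B_{\mathbb{N}}(u)+B_{\mathbb{N}}(u,\textbf{v})]$ reproduces exactly the right-hand side of (3.2), after relabelling the summation indices $(r,\textbf{s})$.

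Finally, for (3.3) I would integrate (3.2) in time from $0$ to $t$. The left-hand side becomes $\sum_{(j,\textbf{l})}\bigl[p_{(i,\textbf{0}),(j,\textbf{l})}(t)-p_{(i,\textbf{0}),(j,\textbf{l})}(0)\bigr]u^j\textbf{v}^{\textbf{l}}$; since $p_{(i,\textbf{0}),(j,\textbf{l})}(0)=\delta_{ij}\,\delta_{\textbf{0}\textbf{l}}$, the subtracted term contributes precisely $u^i$, yielding the left-hand side of (3.3). On the right-hand side the bracket $[\bar B_{\mathbb{N}}(u)+B_{\mathbb{N}}(u,\textbf{v})]$ is independent of $t$ and pulls outside the integral, and Tonelli's theorem lets me exchange the time integral with the sum over $(j,\textbf{k})$, producing the coefficients $\int_0^t p_{(i,\textbf{0}),(j,\textbf{k})}(s)\,ds$. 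I expect the only genuine subtlety to be the passage to the forward equation for the augmented chain, i.e.\ verifying that $\tilde Q$ is regular so that $P(t)$ is honest and term-by-term forward-differentiable, together with the routine justification of the two interchanges (of summation, and of sum with integral); once these are in place the substitution $n=j-r+1$ and the remaining algebra are mechanical.
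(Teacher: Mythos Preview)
Your proposal is correct and follows exactly the route the paper indicates: the paper's own proof consists of the single sentence ``It follows from Kolmogorov forward equation and some algebra,'' and you have supplied precisely that algebra---the substitution $n=j-r+1$ splitting into the $\mathbb{N}^c$ and $\mathbb{N}$ cases, followed by time-integration with the initial condition---together with the regularity argument for $\tilde Q$ and the Fubini/Tonelli justifications that the paper leaves implicit.
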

\begin{proof}
It follows from Kolmogorov forward equation that
\begin{eqnarray*}
 p'_{(i,\textbf{0}),
 (j,\emph{\textbf{l}})}(t)=\sum_{n\geq 1,\emph{\textbf{k}}\in \mathbb{Z}_+^N}p_{(i,\textbf{0}),
 (n,\emph{\textbf{k}})}(t)\cdot q_{(n,\emph{\textbf{k}}),(j,\emph{\textbf{l}})},\ \ \forall t\geq 0,\ i\in \mathbb{Z}_+.
\end{eqnarray*}
Multiplying $u^j\emph{\textbf{v}}^{\emph{\textbf{l}}}$ on both sides of the above equality and then summing over $j$ and $\emph{\textbf{l}}$ yield that
\par
\begin{eqnarray*}
 && \sum_{(j,\emph{\textbf{l}})\in \mathbb{Z}_+^{N+1}}p'_{(i,\textbf{0}),
 (j,\emph{\textbf{l}})}(t)\cdot u^j\emph{\textbf{v}}^{\emph{\textbf{l}}}\\
 &=&\sum_{(j,\emph{\textbf{l}})\in \mathbb{Z}_+^{N+1}}\sum_{n\geq 1,\emph{\textbf{k}}\in \mathbb{Z}_+^N}p_{(i,\textbf{0}),
 (n,\emph{\textbf{k}})}(t)\cdot q_{(n,\emph{\textbf{k}}),(j,\emph{\textbf{l}})}
 \cdot u^j\emph{\textbf{v}}^{\emph{\textbf{l}}}\\
 &=&\sum_{n\geq 1,\emph{\textbf{k}}\in \mathbb{Z}_+^N}\sum_{(j,\emph{\textbf{l}})\in \mathbb{Z}_+^{N+1}}p_{(i,\textbf{0}),
 (n,\emph{\textbf{k}})}(t)\cdot q_{(n,\emph{\textbf{k}}),(j,\emph{\textbf{l}})}
 \cdot u^j\emph{\textbf{v}}^{\emph{\textbf{l}}}\\
  &=&\sum_{n\geq 1,\emph{\textbf{k}}\in \mathbb{Z}_+^N}[\sum_{j:j-n+1\in\mathbb{N}}p_{(i,\textbf{0}),
 (n,\emph{\textbf{k}})}(t)\cdot q_{(n,\emph{\textbf{k}}),(j,\emph{\textbf{k}}+\emph{\textbf{e}}_{j-n+1})}
 \cdot u^j\emph{\textbf{v}}^{\emph{\textbf{k}}
 +\emph{\textbf{e}}_{j-n+1}}\\
 && +\sum_{j:j-n+1\in\mathbb{N}^c}p_{(i,\textbf{0}),
 (n,\emph{\textbf{k}})}(t)\cdot q_{(n,\emph{\textbf{k}}),(j,\emph{\textbf{k}})}
 \cdot u^j\emph{\textbf{v}}^{\emph{\textbf{k}}}]\\
 \end{eqnarray*}
 \begin{eqnarray*}
 &=&\sum_{n\geq 1,\emph{\textbf{k}}\in \mathbb{Z}_+^N}[\sum_{j:j-n+1\in\mathbb{N}}p_{(i,\textbf{0}),
 (n,\emph{\textbf{k}})}(t)\cdot w_nb_{j-n+1}
 \cdot u^j\emph{\textbf{v}}^{\emph{\textbf{k}}
 +\emph{\textbf{e}}_{j-n+1}}\\
 && +\sum_{j:j-n+1\in\mathbb{N}^c}p_{(i,\textbf{0}),
 (n,\emph{\textbf{k}})}(t)\cdot w_nb_{j-n+1}
 \cdot u^j\emph{\textbf{v}}^{\emph{\textbf{k}}}]\\
 &=&\sum_{n\geq 1,\emph{\textbf{k}}\in \mathbb{Z}_+^N}[B_{\mathbb{N}}(u,\emph{\textbf{v}})\cdot p_{(i,\textbf{0}),
 (n,\emph{\textbf{k}})}(t)\cdot w_n
 u^{n-1}\emph{\textbf{v}}^{\emph{\textbf{k}}}+\bar{B}_{\mathbb{N}}(u)\cdot p_{(i,\textbf{0}),
 (n,\emph{\textbf{k}})}(t)\cdot w_nu^{n-1}\emph{\textbf{v}}^{\emph{\textbf{k}}}]\\
 &=&[\bar{B}_{\mathbb{N}}(u)+B_{\mathbb{N}}(u,\emph{\textbf{v}})]\cdot\sum_{j\geq 1,\emph{\textbf{k}}\in \mathbb{Z}_+^N}p_{(i,\textbf{0}),
 (j,\emph{\textbf{k}})}(t)\cdot w_ju^{j-1}\emph{\textbf{v}}^{\emph{\textbf{k}}}.
\end{eqnarray*}
(\ref{eq3.2}) is proved. Integrating (\ref{eq3.2}) yields (\ref{eq3.3}). The proof is complete.
 \hfill $\Box$
\end{proof}
\par
Let
\begin{eqnarray}\label{eq3.4}
\tau=\inf\{t\geq 0;\ X(t)=0\}
\end{eqnarray}
be the extinction time of $\{X(t); t\geq 0\}$.
\par
The following theorem gives the joint probability generating function of $(\mathbb{N}-1)$-crossing numbers conditioned on $\tau<\infty$.
\par
\begin{theorem}\label{th3.1}
Suppose that $\{X(t); t\geq 0\}$ is a weighted Markov branching process with $X(0)=1$. Then
the probability generating function $G(\textbf{v})$ of $(\mathbb{N}-1)$-range crossing numbers conditioned on $\tau<\infty$ is given by
\begin{eqnarray}\label{eq3.5}
G(\textbf{v})=\rho^{-1}\sum_{\textbf{l}\in \mathbb{Z}_+^N}\rho_{\textbf{l}}\textbf{v}^{\textbf{l}},\ \ \textbf{v}\in [0,1]^N,
\end{eqnarray}
where $\rho$ is the minimal nonnegative root of $B(u)=0$, $\rho_{\textbf{0}}$ is the minimal nonnegative root of $\bar{B}_{\mathbb{N}}(u)=0$ and $\rho_{\textbf{l}}\ (\textbf{l}\neq \textbf{0})$ are given by the following recursion.
\begin{eqnarray}\label{eq3.6}
\rho_{\textbf{l}+\textbf{e}_k}
=-\frac{\sum_{j\in \mathbb{N}^c\setminus\{1\}}b_j\sum_{\textbf{l}^{(1)}+\cdots +\textbf{l}^{(j)}=\textbf{l}+\textbf{e}_k,\ \textbf{l}^{(1)}\cdot\textbf{1},\cdots ,\textbf{l}^{(j)}\cdot \textbf{1}\leq \textbf{l}\cdot \textbf{1}}\rho_{\textbf{l}^{(1)}}
\cdots \rho_{\textbf{l}^{(j)}}+b_k\rho^{*(k)}_{\textbf{l}}}{\bar{B}'_{\mathbb{N}}
(\rho_{\textbf{0}})},\ \ k\in \mathbb{N}.
\end{eqnarray}
\end{theorem}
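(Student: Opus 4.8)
The plan is to realize $G(\textbf{v})$ as a suitably normalized absorption generating function for the augmented chain $(X(t),\textbf{Y}(t))$ and then to evaluate the integrated forward equation (3.3) at a distinguished value of $u$. First I would observe from (3.1) that every state of the form $(0,\textbf{l})$ is absorbing, since $q_{(i,\textbf{k}),(j,\textbf{l})}$ can be nonzero only when $i\geq 1$. Consequently, starting from $(1,\textbf{0})$, the chain is eventually absorbed at some $(0,\textbf{l})$ exactly on $\{\tau<\infty\}$, in which case $\textbf{l}$ records the total $(\mathbb{N}-1)$-range crossing vector $\textbf{N}$. Hence $p_{(1,\textbf{0}),(0,\textbf{l})}(\infty):=\lim_{t\to\infty}p_{(1,\textbf{0}),(0,\textbf{l})}(t)=P_1(\tau<\infty,\ \textbf{N}=\textbf{l})$, and the conditional generating function is
\begin{eqnarray*}
G(\textbf{v})=\frac{1}{P_1(\tau<\infty)}\sum_{\textbf{l}\in\mathbb{Z}_+^N}p_{(1,\textbf{0}),(0,\textbf{l})}(\infty)\textbf{v}^{\textbf{l}}.
\end{eqnarray*}
I will identify the numerator with $\rho(\textbf{v})$ and recover $P_1(\tau<\infty)=\rho$ at the end by setting $\textbf{v}=\textbf{1}$, so no extinction-probability input is needed in advance.

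The key step is to substitute $u=\rho(\textbf{v})$ into (3.3) with $i=1$, where $\rho(\textbf{v})$ is the minimal nonnegative root of $\bar{B}_{\mathbb{N}}(u)+B_{\mathbb{N}}(u,\textbf{v})=0$ furnished by Theorem 2.1(i). The bracket then vanishes, the right-hand side of (3.3) is identically zero, and $u^i=\rho(\textbf{v})$, so
\begin{eqnarray*}
\sum_{(j,\textbf{l})\in\mathbb{Z}_+^{N+1}}p_{(1,\textbf{0}),(j,\textbf{l})}(t)\,\rho(\textbf{v})^{j}\textbf{v}^{\textbf{l}}=\rho(\textbf{v})\qquad\text{for all }t\geq 0.
\end{eqnarray*}
Letting $t\to\infty$ and splitting the sum according to $j=0$ and $j\geq 1$, the $j=0$ part increases to $\sum_{\textbf{l}}p_{(1,\textbf{0}),(0,\textbf{l})}(\infty)\textbf{v}^{\textbf{l}}$, while the $j\geq 1$ part, after summing out $\textbf{l}$, is bounded by $\sum_{j\geq 1}\rho(\textbf{v})^{j}p_{1j}(t)$ with $p_{1j}(t)$ the marginal WMBP transition function. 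Once this tail is shown to vanish I obtain $\sum_{\textbf{l}}p_{(1,\textbf{0}),(0,\textbf{l})}(\infty)\textbf{v}^{\textbf{l}}=\rho(\textbf{v})$. Theorem 2.1(ii) then supplies the expansion $\rho(\textbf{v})=\sum_{\textbf{l}}\rho_{\textbf{l}}\textbf{v}^{\textbf{l}}$ with nonnegative coefficients obeying exactly the recursion (3.6) (this is precisely the computation already performed in the proof of Theorem 2.1), and setting $\textbf{v}=\textbf{1}$ gives $P_1(\tau<\infty)=\rho(\textbf{1})=\rho$, which yields (3.5).

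The main obstacle is the limit interchange in the $j\geq 1$ term. Since the states $j\geq 1$ of the regular WMBP are transient (they lead to the absorbing origin with positive probability), $p_{1j}(t)\to 0$ for each fixed $j\geq 1$; when $\rho(\textbf{v})<1$ the series $\sum_{j\geq 1}\rho(\textbf{v})^{j}$ is geometric, and a truncation argument — choose $J$ with $\sum_{j>J}\rho(\textbf{v})^{j}<\varepsilon$, then let $t\to\infty$ in the finite head — forces $\sum_{j\geq 1}\rho(\textbf{v})^{j}p_{1j}(t)\to 0$. This establishes the identity on $[0,1)^N$, and I would extend it to $\textbf{v}=\textbf{1}$ by Abel's theorem, both sides being power series with nonnegative coefficients. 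The only delicate regime is $\rho(\textbf{v})=1$, which can arise only in the borderline case $\rho=1$ and where the termwise estimate is unavailable; there I would instead invoke the continuity of $\rho(\textbf{v})$ from Theorem 2.1(ii) together with the monotone convergence of the absorbing part to pass to the closed cube, rather than controlling the transient tail directly.
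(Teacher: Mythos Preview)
Your proposal is correct and follows essentially the same line as the paper's proof: both use Lemma~3.1 with $i=1$, kill the bracket by taking $u=\rho(\textbf{v})$, let $t\to\infty$ using transience of the states $(j,\textbf{l})$ with $j\geq 1$, and identify the absorption generating function with $\rho(\textbf{v})=\sum_{\textbf{l}}\rho_{\textbf{l}}\textbf{v}^{\textbf{l}}$, then divide by $P_1(\tau<\infty)=\rho$. The paper merely reverses your order of operations (first $t\to\infty$ in (3.3), then the substitution $u=\rho(\textbf{v})$) and is terser about the limit interchange that you justify explicitly.
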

\par
\begin{proof} Let $\tilde{Q}=(q_{(i,\emph{\textbf{k}}), (j,\emph{\textbf{l}})};\ (i,\emph{\textbf{k}}),$ $ (j,\emph{\textbf{l}})\in \mathbb{Z}_+^{N+1} )$ be the $Q$-matrix defined in (\ref{eq3.1}) and $(X(t), \emph{\textbf{Y}}(t))$ be the $\tilde{Q}$-process. Then $\emph{\textbf{Y}}(t)=(Y_k(t); k\in \mathbb{N})$ counts the $(\mathbb{N}-1)$-range crossing numbers until time $t$ and $\emph{\textbf{Y}}(\tau)=(Y_k(\tau); k\in \mathbb{N})$ counts the $(\mathbb{N}-1)$-range crossing numbers conditioned on $\tau<\infty$.
\par
Let $P(t)=(p_{(i,\emph{\textbf{k}}), (j,\emph{\textbf{l}})}(t); (i,\emph{\textbf{k}}), (j,\emph{\textbf{l}})\in \mathbb{Z}_+^{N+1})$ denote the transition probability of $(X(t),\emph{\textbf{Y}}(t))$. It follows from Lemma~\ref{le3.1} that
\begin{eqnarray}\label{eq3.7}
 && \sum_{(j,\emph{\textbf{l}})\in \mathbb{Z}_+^{N+1}}p_{(1,\emph{\textbf{0}}),
 (j,\emph{\textbf{l}})}(t)\cdot u^j\emph{\textbf{v}}^{\emph{\textbf{l}}}-u\nonumber\\
 &=& [\bar{B}_{\mathbb{N}}(u)+B_{\mathbb{N}}(u,\emph{\textbf{v}})]\cdot\sum_{j\geq 1,\emph{\textbf{k}}\in \mathbb{Z}_+^N}(\int_0^tp_{(1,\emph{\textbf{0}}),
 (j,\emph{\textbf{k}})}(s)ds)\cdot w_ju^{j-1}\emph{\textbf{v}}^{\emph{\textbf{k}}}.
\end{eqnarray}
Letting $t\rightarrow \infty$ on both sides of (\ref{eq3.7}) and noting that
$\lim_{t\rightarrow \infty}p_{(1,\emph{\textbf{0}}),(j,\emph{\textbf{l}})}(t)=0$ for $j\geq 1$ (since $(j,\emph{\textbf{l}})$ are transient states for $j\geq 1$) yield
\begin{eqnarray}\label{eq3.8}
\sum_{\emph{\textbf{l}}\in\mathbb{Z}_+^N}a_{\emph{\textbf{l}}}\emph{\textbf{v}}^{\emph{\textbf{l}}}-u
=[\bar{B}_{\mathbb{N}}(u)+B_{\mathbb{N}}(u,\emph{\textbf{v}})]\cdot\sum_{j\geq 1,\emph{\textbf{k}}\in \mathbb{Z}_+^N}(\int_0^{\infty}p_{(1,\emph{\textbf{0}}),
 (j,\emph{\textbf{k}})}(t)dt)\cdot w_ju^{j-1}\emph{\textbf{v}}^{\emph{\textbf{k}}}.
\end{eqnarray}
where $a_{\emph{\textbf{l}}}=\lim_{t\rightarrow \infty}p_{(1,\emph{\textbf{0}}),
 (0,\emph{\textbf{l}})}(t)$. By Theorem~\ref{th2.1}, let $u=\rho(\emph{\textbf{v}})$ in (\ref{eq3.8}), we obtain that
\begin{eqnarray*}
\sum_{\emph{\textbf{l}}\in\mathbb{Z}_+^N}a_{\emph{\textbf{l}}}\emph{\textbf{v}}^{\emph{\textbf{l}}}
-\rho(\emph{\textbf{v}})\equiv 0
\end{eqnarray*}
which implying $a_{\emph{\textbf{l}}}=\rho_{\emph{\textbf{l}}}\ (\emph{\textbf{l}}\in\mathbb{Z}_+^N)$.
\par
Finally,
\begin{eqnarray*}
&&P(\emph{\textbf{Y}}(\tau)=\emph{\textbf{l}}|\tau<\infty)\\
&=& \frac{P(\emph{\textbf{Y}}(\tau)=\emph{\textbf{l}}, \tau<\infty)}{P(\tau<\infty)}\\
&=& \rho^{-1}\lim_{t\rightarrow \infty}P(\emph{\textbf{Y}}(t)=\emph{\textbf{l}}, \tau<t)\\
&=& \rho^{-1}\lim_{t\rightarrow \infty}p_{(1,\emph{\textbf{0}}),
 (0,\emph{\textbf{l}})}(t)\\
 &=& \rho^{-1}\rho_{\emph{\textbf{l}}}.
\end{eqnarray*}
The proof is complete. \hfill $\Box$
\end{proof}
\par
\begin{remark}\label{re3.1}
Theorem~\ref{th3.1} gives the joint probability generating function of $(\mathbb{N}-1)$-crossing numbers conditioned on $\tau<\infty$. Therefore, the joint probability distribution of $(\mathbb{N}-1)$-crossing numbers $\emph{\textbf{Y}}(\tau)$ conditioned on $\tau<\infty$
\begin{eqnarray*}
P(\emph{\textbf{Y}}(\tau)=\emph{\textbf{l}}\ |\tau<\infty)=\rho^{-1}\rho_{\emph{\textbf{l}}},\ \ \emph{\textbf{l}}\in \mathbb{Z}_+^N.
\end{eqnarray*}
\par
If $X(t)$ starts from $X(0)=i (>1)$, then the joint probability generating function of $(\mathbb{N}-1)$-crossing numbers conditioned on $\tau<\infty$ is
\begin{eqnarray*}
G_i(\emph{\textbf{v}})=[G(\emph{\textbf{v}})]^i
\end{eqnarray*}
and the joint probability distribution of $(\mathbb{N}-1)$-crossing numbers $\emph{\textbf{Y}}(\tau)$ conditioned on $\tau<\infty$
\begin{eqnarray*}
P(\emph{\textbf{Y}}(\tau)=\emph{\textbf{l}}\ |\tau<\infty)=\rho^{-i}\cdot\rho^{*(i)}_{\emph{\textbf{l}}},\ \ \emph{\textbf{l}}\in \mathbb{Z}_+^N,
\end{eqnarray*}
where $\{\rho^{*(i)}_{\emph{\textbf{l}}};\ \emph{\textbf{l}}\in \mathbb{Z}_+^N\}$ is the $i$'th convolution of $\{\rho_{\emph{\textbf{l}}};\ \emph{\textbf{l}}\in \mathbb{Z}_+^N\}$.
\end{remark}
\par
As direct consequences of Theorem~\ref{th3.1} and Remark~\ref{re3.1}, the following corollaries give the probability distributions of death number and $(m-1)$-range up crossing number conditioned on $\tau<\infty$ for fixed $m>1$.
\par
\begin{corollary}\label{cor3.1}
Suppose that $\{X(t); t\geq 0\}$ is a weighted Markov branching process with $X(0)=1$. Then
the probability generating function $G(v)$ of death number conditioned on $\tau<\infty$ is given by
\begin{eqnarray*}
G(v)=\rho^{-1}\cdot \sum_{k=0}^{\infty}\rho_{k}v^{k}
\end{eqnarray*}
and hence the probability distribution of death number $Y_0(\tau)$ is given by
\begin{eqnarray*}
P(Y_0(\tau)=k\ |\tau<\infty)=\rho^{-1}\cdot\rho_{k},\ \ k\geq 0,
\end{eqnarray*}
where $\rho_{0}=0$ and $\rho_{k}\ (k\geq 1)$ are given by the following recursion.
\begin{eqnarray*}
&&\rho_1=-b_1^{-1}\cdot b_0,\nonumber\\
&& \rho_{k+1}
=-b_1^{-1}\cdot\sum_{j=2}^{k+1}b_j\rho^{*(j)}_{k+1},\ \ k\geq 1.
\end{eqnarray*}
\end{corollary}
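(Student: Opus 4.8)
The plan is to obtain Corollary~\ref{cor3.1} as the scalar specialization of Theorem~\ref{th3.1} and Remark~\ref{re3.1} to the case $\mathbb{N}=\{0\}$, so that $N=1$ and the vector $\textbf{v}$ collapses to a single variable $v$. First I would record the two relevant generating functions in this case: since $\mathbb{N}^c=\{1,2,3,\dots\}$, one has $\bar{B}_{\mathbb{N}}(u)=\sum_{j\geq 1}b_ju^j=B(u)-b_0$ and $B_{\mathbb{N}}(u,v)=b_0v$. With these identifications the formula $G(v)=\rho^{-1}\sum_{k\geq 0}\rho_kv^k$ and the distribution $P(Y_0(\tau)=k\mid\tau<\infty)=\rho^{-1}\rho_k$ are immediate from (\ref{eq3.5}) and Remark~\ref{re3.1}; the only real work is to evaluate $\rho_0$ and to reduce the recursion (\ref{eq3.6}) to the stated scalar form.

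Next I would compute $\rho_{\textbf{0}}=\rho_0$, the minimal nonnegative root of $\bar{B}_{\mathbb{N}}(u)=0$. Factoring $\bar{B}_{\mathbb{N}}(u)=u\bigl(b_1+b_2u+\cdots\bigr)$ exhibits $u=0$ as a root, so $\rho_0=0$, which is the claimed value, and differentiating gives $\bar{B}'_{\mathbb{N}}(0)=b_1>0$, the quantity appearing in the denominator of (\ref{eq3.6}). For the base term I would invoke the $\textbf{l}\cdot\textbf{1}=1$ case from the proof of Theorem~\ref{th2.1}, namely $\rho_{\textbf{e}_k}=-b_k\rho_{\textbf{0}}^{\,k}/\bar{B}'_{\mathbb{N}}(\rho_{\textbf{0}})$, with $k=0$; using the convention $\rho_0^{0}=1$ this yields $\rho_1=-b_0/b_1=-b_1^{-1}b_0$.

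The heart of the argument is reducing the general recursion (\ref{eq3.6}) to $\rho_{k+1}=-b_1^{-1}\sum_{j=2}^{k+1}b_j\rho^{*(j)}_{k+1}$ for $k\geq1$. Taking the single index of $\mathbb{N}$ to be $0$ and writing $\textbf{l}=k$, the numerator of (\ref{eq3.6}) consists of the sum over $j\in\mathbb{N}^c\setminus\{1\}=\{2,3,\dots\}$ together with the term $b_0\rho^{*(0)}_{k}$. Since $k\geq1$, the convolution identity $\rho^{*(0)}_{k}=0$ for $k\neq 0$ kills this last term. I would then argue that, because $\rho_0=0$, every decomposition $\textbf{l}^{(1)}+\cdots+\textbf{l}^{(j)}=k+1$ containing a zero part contributes nothing, so the constrained inner sum equals the full $j$-fold convolution $\rho^{*(j)}_{k+1}$; moreover a sum of $j$ strictly positive integers cannot equal $k+1$ once $j>k+1$, whence $\rho^{*(j)}_{k+1}=0$ for such $j$ and the outer sum truncates at $j=k+1$. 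Dividing by $\bar{B}'_{\mathbb{N}}(\rho_0)=b_1$ then gives the stated recursion.

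The main, and essentially only, obstacle is this last bookkeeping step: one must verify carefully that the vanishing of $\rho_0$ simultaneously (i) legitimizes using $\bar{B}'_{\mathbb{N}}(\rho_0)=b_1\neq0$ as a nonzero denominator, (ii) removes every convolution term involving a zero part so that the constrained and unconstrained $j$-fold convolutions agree, and (iii) forces the upper truncation of the summation index at $j=k+1$. Once these three points are checked, the remainder of the proof is a direct transcription of (\ref{eq3.5}), (\ref{eq3.6}), and Remark~\ref{re3.1}.
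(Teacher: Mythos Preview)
Your approach is essentially the same as the paper's: specialize Theorem~\ref{th3.1} to $\mathbb{N}=\{0\}$, identify $\rho_0=0$ as the minimal nonnegative root of $\bar{B}_{\mathbb{N}}(u)=\sum_{j\geq1}b_ju^j$, and then collapse the recursion (\ref{eq3.6}) using the fact that $\rho_0=0$ kills both the $b_0\rho^{*(0)}_k$ term and all convolution parts equal to zero, forcing truncation at $j=k+1$. The paper's proof records exactly these two observations and invokes Theorem~\ref{th3.1}.

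One small slip: you write ``$\bar{B}'_{\mathbb{N}}(0)=b_1>0$'', but by (\ref{eq1.2}) we have $b_1<0$; this is consistent with $\bar{B}'_{\mathbb{N}}(\rho_{\textbf{0}})<0$ used in Theorem~\ref{th2.1} and is what makes $\rho_1=-b_0/b_1>0$. This sign does not affect your argument, only the parenthetical remark.
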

\begin{proof}
Note that $\mathbb{N}=\{0\}$, $\rho_0=0$ is the minimal nonnegative root of
$$
\bar{B}_{\mathbb{N}}(u)=\sum_{j=1}^{\infty}b_ju^j=0
$$
and
\begin{eqnarray*}
\sum_{j=2}^{\infty}b_j\sum_{l_1+\cdots +l_j=k+1,\ l_1,\cdots ,l_j\leq k}\rho_{l_1}
\cdots \rho_{l_j}=\sum_{j=2}^{k+1}b_j\rho^{*(j)}_{k+1}.
\end{eqnarray*}
By Theorem~\ref{th3.1}, we immediately obtain the result. The proof is complete. \hfill $\Box$
\end{proof}
\par
\begin{remark}\label{re3.2}
By checking the proof of Theorem~\ref{th3.1}, one can find that the joint probability distribution of crossing numbers conditioned on $\tau<\infty$ does not depend on the sequence $\{w_i;\ i\geq 1\}$, therefore, we have
\par
\rm{(i)}\ For Markov branching process, the probability distribution of death number $Y_0(\tau)$ conditioned on $\tau<\infty$ is given by
\begin{eqnarray*}
P(Y_0(\tau)=k\ |\tau<\infty)=\rho^{-1}\cdot\rho_{k},\ \ k\geq 0
\end{eqnarray*}
where $\rho_k\ (k\geq 0)$ are given in Corollary~\ref{cor3.1}.
\par
\rm{(ii)}\ For $M^X/M/1$ queueing process, the probability distribution of service number $Y_0(\tau)$ in a busy period is given by
\begin{eqnarray*}
P(Y_0(\tau)=k\ |\tau<\infty)=\rho^{-1}\cdot\rho_{k},\ \ k\geq 0
\end{eqnarray*}
where $\rho_k\ (k\geq 0)$ are given in Corollary~\ref{cor3.1}.
\end{remark}
\par
\begin{corollary}\label{cor3.2}
Suppose that $\{X(t); t\geq 0\}$ is a weighted Markov branching process with $X(0)=1$ and $m(>1)$ is fixed. Then
the probability generating function $G(v)$ of $(m-1)$-range up-crossing number conditioned on $\tau<\infty$ is given by
\begin{eqnarray*}
G(v)=\rho^{-1}\cdot\sum_{k=0}^{\infty}\rho_{k}v^{k}
\end{eqnarray*}
and hence the probability distribution of $(m-1)$-range up-crossing number $Y_m(\tau)$ conditioned on $\tau<\infty$ is given by
\begin{eqnarray*}
P(Y_m(\tau)=k\ |\tau<\infty)=\rho^{-1}\cdot\rho_{k},\ \ k\geq 0,
\end{eqnarray*}
where $\rho_{0}$ is the minimal nonnegative root of $B_m(u)=\sum_{j\neq m}b_ju^j=0$ and $\rho_{k}\ (k\geq 1)$ are given by the following recursion.
\begin{eqnarray*}
&&\rho_1=-\frac{b_m\rho_0^m}{B'_m(\rho_0)},\nonumber\\
&& \rho_{k+1}=-\frac{\sum_{i\neq 1,m}b_i\sum_{j_1+\cdots+j_i=k+1;\ j_1,\cdots,j_i\leq k}\rho_{j_1}\cdots\rho_{j_i}+b_m\rho^{*(m)}_{k}}{B'_m(\rho_0)},\ \ k\geq 1,
\end{eqnarray*}
where $\{\rho^{*(m)}_k;\ k\in \mathbb{Z}_+\}$ is the $m$'th convolution of $\{\rho_k;\ k\in \mathbb{Z}_+\}$.
\end{corollary}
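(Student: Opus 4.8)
The plan is to derive Corollary~\ref{cor3.2} as a direct specialization of Theorem~\ref{th3.1} to the single-element index set $\mathbb{N}=\{m\}$, following the same pattern as the proof of Corollary~\ref{cor3.1}. Since $N=|\mathbb{N}|=1$, the multi-index $\textbf{l}\in\mathbb{Z}_+^N$ reduces to a scalar, which I shall call $k$; the vector variable $\textbf{v}$ becomes a single variable $v$; and the only admissible unit vector appearing in recursion~(\ref{eq3.6}) is $\textbf{e}_m$. With these identifications, the generating-function formula~(\ref{eq3.5}) becomes $G(v)=\rho^{-1}\sum_{k=0}^{\infty}\rho_k v^k$, and the distribution $P(Y_m(\tau)=k\mid\tau<\infty)=\rho^{-1}\rho_k$ follows immediately.

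The first concrete step is to compute $\bar{B}_{\mathbb{N}}$. Because $\mathbb{N}^c=\{0,1,2,\dots\}\setminus\{m\}$, one has
\begin{eqnarray*}
\bar{B}_{\mathbb{N}}(u)=\sum_{j\in\mathbb{N}^c}b_ju^j=\sum_{j\neq m}b_ju^j=B_m(u),
\end{eqnarray*}
so that $\rho_0$ is the minimal nonnegative root of $B_m(u)=0$ and, by Theorem~\ref{th2.1}, $\bar{B}'_{\mathbb{N}}(\rho_0)=B'_m(\rho_0)<0$. It is worth contrasting this with Corollary~\ref{cor3.1}: there $\mathbb{N}=\{0\}$ forces $\rho_0=0$, whereas here $B_m(0)=b_0>0$, so $\rho_0$ is strictly positive and must be carried through the recursion.

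Next I would read off the base case from Theorem~\ref{th3.1}. The formula $\rho_{\textbf{e}_k}=-b_k\rho_{\textbf{0}}^{\,k}/\bar{B}'_{\mathbb{N}}(\rho_{\textbf{0}})$, evaluated at the unique choice $k=m$, gives $\rho_1=-b_m\rho_0^{\,m}/B'_m(\rho_0)$. For the general step I would substitute $\mathbb{N}=\{m\}$ into~(\ref{eq3.6}) and translate the multi-index data into scalars: the outer sum over $j\in\mathbb{N}^c\setminus\{1\}$ becomes a sum over all $i\neq 1,m$; the inner convolution constraint $\textbf{l}^{(1)}+\cdots+\textbf{l}^{(i)}=\textbf{l}+\textbf{e}_m$ with each part $\textbf{l}^{(r)}\cdot\textbf{1}\leq\textbf{l}\cdot\textbf{1}=k$ becomes $j_1+\cdots+j_i=k+1$ with $j_1,\dots,j_i\leq k$; and the diagonal term $b_k\rho^{*(k)}_{\textbf{l}}$, with Theorem's index $k$ equal to $m$ and Theorem's $\textbf{l}$ of scalar value $k$, becomes $b_m\rho^{*(m)}_k$. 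Assembling these yields exactly the claimed recursion for $\rho_{k+1}$.

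I do not anticipate a genuine mathematical obstacle, since the argument is a faithful transcription of Theorem~\ref{th3.1}; the only real care is notational. The single point that must be watched is the double use of the letter $k$: in Theorem~\ref{th3.1} it denotes an element of $\mathbb{N}$ (here forced to be $m$), whereas in the corollary it is the scalar value of the single component of $\textbf{l}$. Keeping these roles apart is precisely what ensures the diagonal term reads $b_m\rho^{*(m)}_k$---the full $m$-fold convolution evaluated at the value $k$ of $\textbf{l}$, not at $k+1$---and that the well-definedness of every step rests on the strict inequality $B'_m(\rho_0)<0$ furnished by Theorem~\ref{th2.1}.
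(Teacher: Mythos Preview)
Your proposal is correct and takes exactly the same approach as the paper: the paper's own proof consists of the single sentence ``It follows directly from Theorem~\ref{th3.1} with $\mathbb{N}=\{m\}$,'' and your write-up is a careful expansion of that specialization, with the notational translations spelled out. Your attention to the clash of the letter $k$ (an element of $\mathbb{N}$ in Theorem~\ref{th3.1} versus the scalar index in the corollary) is exactly the point that needs to be handled to obtain the correct diagonal term $b_m\rho^{*(m)}_k$.
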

\begin{proof}
It follows directly from Theorem~\ref{th3.1} with $\mathbb{N}=\{m\}$. \hfill $\Box$
\end{proof}
\par
\begin{theorem}\label{th3.2}
Suppose that $\{X(t); t\geq 0\}$ is a weighted Markov branching process with $X(0)=1$, $\rho=1$ \rm{(i.e., $B'(1)\leq 0$)}, $m\neq 1$, $Y_m(\tau)$ is the $(m-1)$-range crossing number. Then
\begin{eqnarray*}
  E[Y_m(\tau)]=\sum_{k=0}^{\infty}k\rho_k
\end{eqnarray*}
 and
 \begin{eqnarray*}
    Var[(Y_m(\tau)]=\sum_{k=0}^{\infty}k^2\rho_k-(\sum_{k=0}^{\infty}k\rho_k)^2,
 \end{eqnarray*}
 where $\rho_k (k\geq 0)$ are given in Corollary~\ref{cor3.1} \rm{(}in the case $m=0$\rm{)} and in Corollary~\ref{cor3.2} \rm{(}in the case $m>1$\rm{)}.
\end{theorem}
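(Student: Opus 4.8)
The plan is to extract the two moments directly from the probability generating function of $Y_m(\tau)$, which is already available from Theorem~\ref{th3.1}; in the single-variable setting $\mathbb{N}=\{m\}$ this is exactly the function supplied by Corollary~\ref{cor3.1} (when $m=0$) and by Corollary~\ref{cor3.2} (when $m>1$). The decisive simplification is the hypothesis $\rho=1$. Since $\rho$ is the extinction probability of $\{X(t);t\geq 0\}$ with $X(0)=1$ (equivalently $\sum_k\rho_k=\rho(1)=\rho$, so that $G(1)=1$), the case $\rho=1$, i.e. $B'(1)\leq 0$, forces $P(\tau<\infty)=1$. The conditioning on $\tau<\infty$ is then vacuous, and the generating function of $Y_m(\tau)$ reduces from $\rho^{-1}\sum_{k\geq 0}\rho_k v^k$ to
\[
G(v)=\sum_{k=0}^{\infty}\rho_k v^k,\qquad v\in[0,1],
\]
a bona fide probability generating function with $G(1)=1$.

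First I would apply the standard factorial-moment identities for probability generating functions. Because all coefficients satisfy $\rho_k\geq 0$ (Theorem~\ref{th2.1}), the derivatives $G'(v)$ and $G''(v)$ are nondecreasing on $[0,1)$, so their left-hand limits at $v=1$ exist in $[0,\infty]$; Abel's theorem together with monotone convergence lets me evaluate them termwise, yielding
\[
E[Y_m(\tau)]=G'(1^-)=\sum_{k=0}^{\infty}k\rho_k
\]
and
\[
E[Y_m(\tau)(Y_m(\tau)-1)]=G''(1^-)=\sum_{k=0}^{\infty}k(k-1)\rho_k,
\]
each identity valid whether the right-hand side is finite or infinite. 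This already gives the asserted formula for the mean.

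For the variance I would assemble the second moment from the two factorial moments. Using $E[Y_m(\tau)^2]=E[Y_m(\tau)(Y_m(\tau)-1)]+E[Y_m(\tau)]$ and the series above gives
\[
E[Y_m(\tau)^2]=\sum_{k=0}^{\infty}k(k-1)\rho_k+\sum_{k=0}^{\infty}k\rho_k=\sum_{k=0}^{\infty}k^2\rho_k,
\]
whence
\[
Var[Y_m(\tau)]=E[Y_m(\tau)^2]-\bigl(E[Y_m(\tau)]\bigr)^2=\sum_{k=0}^{\infty}k^2\rho_k-\Bigl(\sum_{k=0}^{\infty}k\rho_k\Bigr)^2,
\]
as claimed.

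The only step needing genuine care---and the main potential obstacle---is the interchange of limit and summation in passing to $v\to 1^-$, since the moments may well be infinite. I expect this to be dispatched cleanly using the nonnegativity of the $\rho_k$: monotonicity of $G'$ and $G''$ on $[0,1)$ guarantees the one-sided limits exist, and Abel's theorem identifies them with the coefficient sums, so no dominated-convergence or uniform-integrability argument is required. The split between $m=0$ and $m>1$ affects only the recursion producing the $\rho_k$ (Corollary~\ref{cor3.1} versus Corollary~\ref{cor3.2}); the moment computation is verbatim the same in both cases.
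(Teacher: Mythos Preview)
Your proposal is correct and follows essentially the same route as the paper: both use that $\rho=1$ makes $G(v)=\sum_{k\geq 0}\rho_k v^k$ an honest probability generating function and then read off $E[Y_m(\tau)]=G'(1^-)$ and $Var[Y_m(\tau)]=G''(1^-)+G'(1^-)-[G'(1^-)]^2$. Your version is more detailed about the Abel/monotone-convergence justification, but the argument is the same.
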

\begin{proof}
Since $\rho=1$, we know that $G(v)$ is the probability generating function of $Y_m(\tau)$, therefore,
\begin{eqnarray*}
  E[Y_m(\tau)]=G'(1^-), \ \  Var[(Y_m(\tau)]= G^{''}{(1^-)}+G'(1^-)-[G'(1^-)]^2.
\end{eqnarray*}
The results follow from Corollaries~\ref{cor3.1} and~\ref{cor3.2}. The proof is complete.\hfill $\Box$
\end{proof}
\par
Theorem~\ref{th3.1} gives the joint probability distribution of $(\mathbb{N}-1)$-range crossing numbers conditioned on $\tau<\infty$. We now turn to consider the case $\tau=\infty$. For simplicity, we only consider the case that $\mathbb{N}$ contains a single element, the general case is similar.
\par
Let $m\in \mathbb{Z}_+$ with $b_m>0$ and $\tilde{Q}_m=(q_{(i,k), (j,l)};\ (i,k), (j,l)\in \mathbb{Z}_+^2)$ be a $Q$-matrix defined by (\ref{eq3.1}) with $\mathbb{N}=\{m\}$. Suppose that $(X(t), Y_m(t))$ is the $\tilde{Q}_m$-process, where $X(t)$ is the weighted Markov branching process, $Y_m(t)$ counts the $(m-1)$-range crossing number until time $t$. $P(t)=(p_{(i,k), (j,l)};\ (i,k), (j,l)\in \mathbb{Z}_+^2)$ is the $\tilde{Q}_m$-function.
\par
\begin{theorem}\label{th3.3}
Suppose that $(X(t), Y_m(t))$ is the $\tilde{Q}_m$-process with $(X(0),Y(0))=(1,0)$ and $\rho<1$.  Then
\begin{eqnarray}\label{eq3.9}
P(Y_m(\infty)=\infty|\tau=\infty)=1.
\end{eqnarray}
\end{theorem}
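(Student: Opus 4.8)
The plan is to pass from the continuous-time process to its embedded jump chain and to exploit the fact that the \emph{type} of each jump is independent of the current state. First I would record that $P(\tau=\infty)=1-\rho>0$ when $\rho<1$: since $\rho$ is the extinction probability of the underlying branching process (consistent with $G(\textbf{1})=\rho^{-1}\rho(\textbf{1})=1$ in Theorem~\ref{th3.1}), the conditional probability in $(\ref{eq3.9})$ is well defined.

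Let $X_0,X_1,\dots$ denote the embedded jump chain of $X(t)$, and set $\mathcal{F}_n=\sigma(X_0,\dots,X_n)$. From any state $i\ge 1$ the total exit rate is $-w_ib_1$, while the rate of a jump with $j-i+1=m$ (the jumps that increment $Y_m$) is $w_ib_m$; hence, given that a jump occurs, it is of type $m$ with probability
$$
p_m=\frac{b_m}{-b_1}>0 ,
$$
independently of $i$ and of the past, the weights $w_i$ cancelling in the ratio. Writing $D_n=\{X_{n-1}\ge 1\}\in\mathcal{F}_{n-1}$ for the event that an $n$th jump occurs and $B_n=D_n\cap\{\text{the }n\text{th jump is of type }m\}$, I would then have $P(B_n\mid\mathcal{F}_{n-1})=p_m\,\mathbf{1}_{D_n}$.

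Next I would show that on $\{\tau=\infty\}$ every $D_n$ holds. Since $1\notin\mathbb{N}$, every state $i\ge 1$ has positive exit rate $-w_ib_1$ and is therefore non-absorbing, so the process cannot get stuck; and by regularity it is non-explosive, so $T_n\uparrow\infty$. Consequently, staying in $\{1,2,\dots\}$ for all time forces infinitely many jumps, $Y_m(\infty)$ equals the total number of type-$m$ jumps, and on $\{\tau=\infty\}$ one has $\sum_n P(B_n\mid\mathcal{F}_{n-1})=\sum_n p_m=\infty$. Applying L\'evy's conditional Borel--Cantelli lemma, namely $\{\sum_n P(B_n\mid\mathcal{F}_{n-1})=\infty\}=\{B_n\ \text{i.o.}\}$ a.s., I conclude that on $\{\tau=\infty\}$ infinitely many type-$m$ jumps occur, i.e.\ $Y_m(\infty)=\infty$ almost surely, which yields $(\ref{eq3.9})$.

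The hard part will be the conditioning: the successive jump types are \emph{not} unconditionally independent, because the chain may be absorbed at $0$, so one cannot directly invoke the ordinary second Borel--Cantelli lemma for independent events. The conditional (L\'evy) version circumvents this cleanly, the only points needing care being the $\mathcal{F}_{n-1}$-measurability of $D_n$ and the verification that $p_m$ is genuinely state-independent. An equivalent route, if one prefers to avoid the conditional lemma, is to represent the pre-absorption chain as the random walk $S_n=1+\sum_{i\le n}(J_i-1)$ driven by i.i.d.\ jump types $J_i$ and to argue that an infinite suffix of non-type-$m$ jumps has probability zero; but the conditional Borel--Cantelli argument appears the most economical.
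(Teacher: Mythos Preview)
Your argument is correct and is a genuinely different proof from the one in the paper. The paper proceeds analytically: it starts from the generating-function identity of Lemma~\ref{le3.1} with $\mathbb{N}=\{m\}$, sends $t\to\infty$ and then $u\uparrow 1$ in two different orders, and obtains two expressions for $\sum_{j\ge1}w_j\int_0^\infty p_{(1,0),(j,k)}(s)\,ds$; comparing them forces $P(Y_m(\infty)\le k,\ \tau=\infty)=0$ for every $k$. Your route is purely probabilistic: you pass to the embedded jump chain, observe that the conditional probability of a type-$m$ jump is $b_m/(-b_1)$ regardless of the current state because the weights $w_i$ cancel, and then apply L\'evy's conditional Borel--Cantelli lemma on $\{\tau=\infty\}$, where non-absorption guarantees infinitely many jumps. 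Your approach is shorter, more transparent, and makes explicit the structural reason (state-independence of jump \emph{type}) behind the result; the paper's approach, on the other hand, stays within the generating-function framework used throughout and yields as a byproduct the explicit occupation identity $\sum_{j\ge1}w_j\int_0^\infty p_{(1,0),(j,k)}(s)\,ds=b_m^{-1}\bigl(1-\sum_{i\le k}\rho_i\bigr)$, which has independent interest. One cosmetic point: your sentence ``since $1\notin\mathbb{N}$, every state $i\ge1$ has positive exit rate $-w_ib_1$'' conflates two things---the positivity of the exit rate comes from $w_i>0$ and $-b_1>0$ in (\ref{eq1.2}), not from $1\notin\mathbb{N}$---but the conclusion is unaffected.
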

\par
\begin{proof}
It follows from Lemma~\ref{le3.1} with $\mathbb{N}=\{m\}$ that for any $u,v\in[0,1]$,
\begin{eqnarray*}
 && \sum_{(j,k)\in \mathbb{Z}_+^{2}}p_{(1,0),
 (j,k)}(t)\cdot u^jv^k-u=B_m(u,v)\cdot\sum_{k=0}^{\infty}\sum_{j=1}^{\infty}(\int_0^tp_{(1,0),
 (j,k)}(s)ds)\cdot w_ju^{j-1}v^k\ \ \ \ \ \ \ \
\end{eqnarray*}
where $B_m(u,v)=\sum_{i\neq m}b_iu^i+b_{m}u^mv$.
i.e.,
\begin{eqnarray}\label{eq3.10}
 && \sum_{k=0}^{\infty}p_{(1,0),
 (0,k)}(t)\cdot v^k+\sum_{k=0}^{\infty}\sum_{j=1}^{\infty}p_{(1,0),
 (j,k)}(t)\cdot u^jv^k-u\nonumber\\
 &=& B_m(u,v)\cdot\sum_{k=0}^{\infty}\sum_{j=1}^{\infty}(\int_0^tp_{(1,0),
 (j,k)}(s)ds)\cdot w_ju^{j-1}v^k.
\end{eqnarray}
Letting $t\rightarrow \infty$ in the above equality and noting $\lim_{t\rightarrow \infty}\sum_{k=0}^{\infty}p_{(1,0),
 (0,k)}(t)v^k=\rho(v)$ yield
\begin{eqnarray*}
\rho(v)-u= B_m(u,v)\cdot\sum_{k=0}^{\infty}\sum_{j=1}^{\infty}(\int_0^{\infty}p_{(1,0),
 (j,k)}(s)ds)\cdot w_ju^{j-1}v^k,\ \ \ \forall\ u,v\in [0,1)
\end{eqnarray*}
where $\rho(v)$, the minimal nonnegative root of $B_m(u,v)=0$ for fixed $v\in [0,1]$, is given in Corollary~\ref{cor3.2}. Letting $u\uparrow 1$ and using monotone convergence theorem yield
\begin{eqnarray*}
1-\rho(v)= b_m(1-v)\cdot\sum_{k=0}^{\infty}\sum_{j=1}^{\infty}(\int_0^{\infty}p_{(1,0),
 (j,k)}(s)ds)\cdot w_jv^k,\ \ \ \forall\ v\in [0,1)
\end{eqnarray*}
which implies
\begin{eqnarray}\label{eq3.11}
  \sum_{j=1}^{\infty}(\int_0^{\infty}p_{(1,0),(j,k)}(s)ds)\cdot w_j=b_m^{-1}\cdot (1-\sum_{i=0}^k\rho_i),\ \ \ k\geq 0
\end{eqnarray}
and
$$
\sum_{k=0}^{\infty}\sum_{j=1}^{\infty}(\int_0^{\infty}p_{(1,0),
 (j,k)}(s)ds)\cdot w_jv^k<\infty,\ \ \ \ \forall\ v\in [0,1).
$$
\par
On the other hand, letting $u\uparrow1$ in (\ref{eq3.10}) yields
 \begin{eqnarray*}
 && 1-\sum_{k=0}^{\infty}p_{(1,0),
 (0,k)}(t)\cdot v^k-\sum_{k=0}^{\infty}P(Y_m(t)=k, \tau>t)v^k\nonumber\\
 &=& b_m(1-v)\cdot\sum_{k=0}^{\infty}\sum_{j=1}^{\infty}(\int_0^tp_{(1,0),
 (j,k)}(s)ds)\cdot w_jv^k
\end{eqnarray*}
Therefore,
 \begin{eqnarray*}
&&  \sum_{j=1}^{\infty}(\int_0^tp_{(1,0),(j,k)}(s)ds)\cdot w_j=b_m^{-1}\cdot [1-\sum_{i=0}^kp_{(1,0),
 (0,i)}(t)-P(Y_m(t)\leq k, \tau>t)],\ \ \ k\geq 0\\
\end{eqnarray*}
Letting $t\rightarrow \infty$ and using monotone convergence theorem yield
 \begin{eqnarray*}
&& \sum_{j=1}^{\infty}(\int_0^{\infty}p_{(1,0),(j,k)}(s)ds)\cdot w_j=b_m^{-1}\cdot [1-\sum_{i=0}^k\rho_i-P(Y_m(\infty)\leq k, \tau=\infty)],\ \ \ k\geq 0.
\end{eqnarray*}
Comparing the above equality with (\ref{eq3.11}), we see that
 \begin{eqnarray*}
P(Y_m(\infty)\leq k, \tau=\infty)=0,\ \ \ k\geq 0.
\end{eqnarray*}
Hence,
 \begin{eqnarray*}
P(Y_m(\infty)=\infty, \tau=\infty)=1.
\end{eqnarray*}
The proof is complete. \hfill $\Box$
\end{proof}
\par
Finally, we give two examples to illustrate the conclusions obtained above.
\begin{example}\label{ex3.1}
Let $X(0)=1$ and
\begin{eqnarray*}
B(u)=\mu -(\mu+\lambda)u+\lambda u^2
\end{eqnarray*}
and $\mathbb{N}=\{0,2\}$. Then
\begin{eqnarray*}
B_{\mathbb{N}}(u,y,z)=\mu y+\lambda zu^2,\ \ \bar{B}_{\mathbb{N}}(u)=-(\mu+\lambda)u.
\end{eqnarray*}
Consider
\begin{eqnarray}\label{eq3.12}
\bar{B}_{\mathbb{N}}(u)+B_{\mathbb{N}}(u,y,z)=\mu y-(\mu+\lambda)u+\lambda zu^2=0.
\end{eqnarray}
The minimal nonnegative root of (\ref{eq3.12}) is
\begin{eqnarray*}
\rho(y,z)=\frac{\mu+\lambda-\sqrt{(\mu+\lambda)^2-4\mu \lambda zy}}{2\lambda z},\ \ \rho=\rho(1,1)=\frac{\mu}{\lambda}\wedge 1.
\end{eqnarray*}
Using Tailor series $\sqrt{1+x}=1+\frac{1}{2}x+\sum_{n=2}^{\infty}\frac{(2n-3)!!}{2^nn!}(-1)^{n-1}x^n$
yields
\begin{eqnarray*}
\rho(y,z)&=&\frac{\mu+\lambda}{2\lambda z}\cdot\left[1-\sqrt{1-\frac{4\mu \lambda zy}{(\mu+\lambda)^2}}\right]\\
&=& \frac{\mu+\lambda}{2\lambda z}\cdot\left[1-(1-\frac{4\mu \lambda zy}{2(\mu+\lambda)^2}+\sum_{n=2}^{\infty}\frac{(2n-3)!!}{2^nn!}(-1)^{n-1}(-\frac{4\mu \lambda zy}{(\mu+\lambda)^2})^n)\right]\\
&=& \frac{\mu+\lambda}{2\lambda z}\cdot\left[\frac{4\mu \lambda zy}{2(\mu+\lambda)^2}+\sum_{n=2}^{\infty}\frac{(2n-3)!!}{2^nn!}(\frac{4\mu \lambda zy}{(\mu+\lambda)^2})^n\right]\\
&=& \frac{\mu y}{\mu+\lambda}+\sum_{n=2}^{\infty}
\frac{(2n-3)!!2^{n-1}\mu^{n}\lambda^{n-1}}{n!(\mu+\lambda)^{2n-1}}
z^{n-1}y^n.\\
\end{eqnarray*}
Therefore,
\begin{eqnarray*}
\begin{cases}
\rho_{1,0}=\frac{\mu}{\mu+\lambda},\\
\rho_{n,n-1}=\frac{(2n-3)!!2^{n-1}\mu^{n}\lambda^{n-1}}{n!(\mu+\lambda)^{2n-1}},\ \ n\geq 2\\
\rho_{n,m}=0,\ \ \rm{otherwise}
\end{cases}
\end{eqnarray*}
and hence,
\par
\rm{(i)}\ if $\mu \geq \lambda$, then
\begin{eqnarray*}
\begin{cases}
P((Y_0(\tau),Y_2(\tau))=(1,0))=\frac{\mu}{\mu+\lambda},\\
P((Y_0(\tau),Y_2(\tau))=(n,n-1))=\frac{(2n-3)!!2^{n-1}\mu^{n}\lambda^{n-1}}{n!(\mu+\lambda)^{2n-1}},\ \ n\geq 2\\
P((Y_0(\tau),Y_2(\tau))=(n,m))=0,\ \ \rm{otherwise},
\end{cases}
\end{eqnarray*}
\begin{eqnarray*}
\begin{cases}
P(Y_0(\tau)=0)=0,\\
P(Y_0(\tau)=1)=\frac{\mu}{\mu+\lambda},\\
P(Y_0(\tau)=n)=\frac{(2n-3)!!2^{n-1}\mu^{n}\lambda^{n-1}}{n!(\mu+\lambda)^{2n-1}},\ \ n\geq 2,
\end{cases}
\end{eqnarray*}
\begin{eqnarray*}
\begin{cases}
P(Y_2(\tau)=0)=\frac{\mu}{\mu+\lambda},\\
P(Y_2(\tau)=n)=\frac{(2n-1)!!2^{n}\mu^{n+1}\lambda^{n}}{(n+1)!(\mu+\lambda)^{2n+1}},\ \ n\geq 1.
\end{cases}
\end{eqnarray*}
\par
\rm{(ii)}\ if $\mu <\lambda$, then
\begin{eqnarray*}
\begin{cases}
P((Y_0(\tau),Y_2(\tau))=(1,0)\ |\tau<\infty)=\frac{\lambda}{\mu+\lambda},\\
P((Y_0(\tau),Y_2(\tau))=(n,n-1)\ |\tau<\infty)=\frac{(2n-3)!!2^{n-1}\mu^{n-1}\lambda^{n}}{n!(\mu+\lambda)^{2n-1}},\ \ n\geq 2\\
P((Y_0(\tau),Y_2(\tau))=(n,m)\ |\tau<\infty)=0,\ \ \rm{otherwise},
\end{cases}
\end{eqnarray*}
\begin{eqnarray*}
\begin{cases}
P(Y_0(\tau)=0\ |\tau<\infty)=0,\\
P(Y_0(\tau)=1\ |\tau<\infty)=\frac{\lambda}{\mu+\lambda},\\
P(Y_0(\tau)=n\ |\tau<\infty)=\frac{(2n-3)!!2^{n-1}\mu^{n-1}\lambda^{n}}{n!(\mu+\lambda)^{2n-1}},\ \ n\geq 2,
\end{cases}
\end{eqnarray*}
\begin{eqnarray*}
\begin{cases}
P(Y_2(\tau)=0\ |\tau<\infty)=\frac{\lambda}{\mu+\lambda},\\
P(Y_2(\tau)=n\ |\tau<\infty)=\frac{(2n-1)!!2^{n}\mu^{n}\lambda^{n+1}}{(n+1)!(\mu+\lambda)^{2n+1}},\ \ n\geq 1.
\end{cases}
\end{eqnarray*}
\end{example}
\par
\begin{example}\label{ex3.2}
Let $X(0)=1$ and
\begin{eqnarray*}
B(u)=2q-3pu+u^3,
\end{eqnarray*}
where $q>0$ and $3p=2q+1$. For $v\in [0,1)$, consider
\begin{eqnarray}\label{eq3.13}
B(u,v)=2qv-3pu+u^3.
\end{eqnarray}
Let $\rho(v)$ be the minimal nonnegative root of (\ref{eq3.13}) for fixed $v\in [0,1]$. Obviously, $\rho(0)=0$ and we can assume
$$
  \rho(v)=\sum_{k=1}^{\infty}\rho_kv^k.
$$
Then
\begin{eqnarray}\label{eq3.14}
   2qv-3p\rho(v)+\rho^3(v)\equiv 0, \ \ v\in [0,1).
\end{eqnarray}
It is easy to see that
\begin{eqnarray}\label{eq3.15}
 \rho_1=\frac{2q}{3p},\ \rho_2=0.
\end{eqnarray}
Differentiating (\ref{eq3.14}) yields
\begin{eqnarray*} -p\rho^{(n+1)}(v)+\sum_{k=0}^{n}C_n^k(\sum_{i=0}^kC_k^i\rho^{(i)}(v)\rho^{(k-i)}(v))\rho^{(n-k+1)}(v)\equiv 0,\ \ n\geq 2.
\end{eqnarray*}
Letting $v=0$ in the above equality yields
\begin{eqnarray*}
p(n+1)!\rho_{n+1}=\sum_{k=2}^{n}C_n^k(\sum_{i=1}^{k-1}C_k^ii!(k-i)!\rho_{i}\rho_{k-i})(n-k+1)!\rho_{n-k+1},\ \ n\geq 2.
\end{eqnarray*}
i.e.,
\begin{eqnarray}\label{eq3.16}
\rho_{n+1}=\frac{1}{p(n+1)}\cdot \sum_{k=2}^{n}(n-k+1)\rho_{n-k+1}\cdot(\sum_{i=1}^{k-1}\rho_{i}\rho_{k-i}),\ \ n\geq 2.
\end{eqnarray}
Therefore, if $p\geq 1$, then
\begin{eqnarray*}
\begin{cases}
P(Y_0(\tau)=0)=0,\\
P(Y_0(\tau)=1)=\frac{2q}{3p},\\
P(Y_0(\tau)=2)=0,\\
P(Y_0(\tau)=n)=\rho_n,\ \ n\geq 3.
\end{cases}
\end{eqnarray*}
 If $p<1$, then
\begin{eqnarray*}
\begin{cases}
P(Y_0(\tau)=0\ |\tau<\infty)=0,\\
P(Y_0(\tau)=1 |\tau<\infty)=\frac{2q}{3p\rho},\\
P(Y_0(\tau)=2|\tau<\infty)=0,\\
P(Y_0(\tau)=n|\tau<\infty)=\rho^{-1}\cdot\rho_n,\ \ n\geq 3
\end{cases}
\end{eqnarray*}
where $\rho=\rho(1)$ is the minimal nonnegative root of $B(u)=0$ and $\rho_n$ is given in (\ref{eq3.15}) and (\ref{eq3.16}).
\end{example}
\section*{Acknowledgement}
\par
 This work is substantially supported by the National Natural Sciences Foundations of China (No. 11771452, No. 11971486).

\end{document}